\title{Grothendieck duality under $\Spec \mathbb{Z}$.}
\date{August 2010.}
\begin{document}
\begin{abstract}
We define the derived category of a concrete category in a way
which extends the usual definition of the derived category of a ring, and
we prove that the bounded-below derived category of 
$\Spec \mathbb{M}_0$
(an approximation, used by e.g. Connes and Consani, to
``$\Spec$ of the field with one element'') is the stable homotopy category of 
connective spectra. We also describe some basic features of Grothendieck duality for the
map from $\Spec \mathbb{Z}$ to $\Spec \mathbb{M}_0$, or, what comes to the same thing,
the map from $\Spec \mathbb{Z}$ to $\Spec$ of the sphere spectrum; these
basic features include a computation of
the homology of the dualizing complex $f^!(S)$ of abelian groups associated to 
the sphere spectrum.
\end{abstract}
\maketitle

\section{Introduction.}

It has been known, at least since the time of Waldhausen coining the phrase 
``brave
new ring,'' that essentially all of commutative algebra embeds into stable 
homotopy
theory via the Eilenberg-Maclane functor $H$; an excellent reference for this
idea, which also makes clear the kind of technical obstacles involved in making it
precise and proving it, is \cite{MR2306038}. From this point of view,
$E_\infty$-ring spectra are a generalization of commutative rings, and one wants
to be able to knit together $E_\infty$-ring spectra to form some geometric
objects, in the same way that commutative rings are knitted together to form
schemes; such geometric objects would bring together the methods and results of
algebraic geometry with the methods and results of stable homotopy theory. Two
multi-volume works (Toen and Vezzosi's HAG series \cite{MR2137288} and 
\cite{MR2394633}, and
the DAG volumes by Lurie) have already been written, establishing
the basic definitions and results to allow us to work with these 
``derived schemes.''

One way of viewing the situation is that all of algebraic geometry takes place, as
is well-known, over the base scheme $\Spec \mathbb{Z}$, but stable homotopy theory
takes place over the base scheme ``$\Spec S$,'' which we have to put in quote 
marks,
since it is not actually a scheme and not actually $\Spec$ of a commutative ring
in the classical sense; here $S$ is the sphere spectrum, the unit object of the
tensor product (smash product) on the category of spectra. In this respect,
stable homotopy theory is a kind of algebraic geometry which happens 
``under $\Spec \mathbb{Z}$.'' Actually, it is not the {\em only} kind of 
algebraic 
geometry ``under $\Spec \mathbb{Z}$,'' as the various candidates for what should 
be
considered ``schemes over $\Spec \mathbb{F}_1$'' also sit under 
$\Spec \mathbb{Z}$;
see \cite{MR2507727} for a useful perspective on this (although it should
be remarked that the version of $\Spec \mathbb{F}_1$ in that paper does not
have all the properties that non-commutative geometers want to have in a
good construction of $\Spec \mathbb{F}_1$, and there are other candidates, 
e.g. Connes' and Consani's $\mathbb{F}_1$-schemes from \cite{schemesoverf1}, 
or Borger's schemes built from $\lambda$-rings, which are preferable to
those in \cite{MR2507727} for various reasons). One approximation
to the category of $\mathbb{F}_1$-schemes is the category
of $\mathbb{M}_0$-schemes, which sits under $\Spec \mathbb{Z}$,
and which we will be interested in, in this note.

Jack Morava has advocated the study of the morphism $\Spec \mathbb{Z}\rightarrow
\Spec S$, and in particular, he has encouraged the development and application of
the methods and results from Hartshorne's \cite{MR0222093}
to this morphism. From e.g. \cite{MR0222093} one knows that, given
a scheme $X$ of finite type over a field $k$, one has the
scheme morphism $X\stackrel{f}{\longrightarrow} \Spec k$, and
one has a kind of duality functor on the derived category of
quasicoherent $\mathcal{O}_X$-modules, represented by 
a chain complex $f^!k[0]$; here $k[0]$ is the chain complex of 
$k$-modules consisting of $k$ in degree $0$ and the trivial
module in all other degrees, and $f^!$ is a derived right
adjoint to $Rf_*$, the total right derived functor of the 
push-forward of modules along $f$. Clearly, the functor
$\hom_{\Mod(k)}(-,k)$ is a duality functor, in the most 
straightforward sense, on the category of finite-dimensional
$k$-vector spaces; so $f^!k[0]$ is 
``pulling back'' the representing object $k$ for this
duality on $k$-modules to a representing object for a
(derived) duality on (chain complexes of) $\mathcal{O}_X$-modules.

Now we want to do the same thing, but for the morphism
$\Spec \mathbb{Z}\stackrel{f}{\longrightarrow} \Spec S$. 
One knows that the sphere spectrum $S$ represents the 
Spanier-Whitehead duality functor, and one would like to know what kind of 
``dualizing complex'' of abelian groups one gets when one applies $f^!$ to the
sphere spectrum, where $f^!$ is a derived right adjoint for $Rf_*$.
In this note, we accomplish at least a large part of that task:
we make precise 
(in two different, non-equivalent ways, each of which has some desirable properties)
what we mean when we speak of 
``the morphism $\Spec\mathbb{Z}\stackrel{f}{\longrightarrow} \Spec S$,'' 
we describe the functors $f_*, f^*$, and $f^!$ associated to $f$, and we compute the
homology groups of the dualizing complex $f^!S$, as well as some interesting
properties of this dualizing complex. All of this essentially follows as consequences
of results proved by others.

There is another interpretation of our results which is worth
considering. One useful approximation to the category of 
$\mathbb{F}_1$-schemes is the category of $\mathbb{M}_0$-schemes
(see e.g. \cite{schemesoverf1} for definitions and basic
properties of $\mathbb{M}_0$-schemes, as well as their role
in the construction of $\mathbb{F}_1$-schemes): 
these objects resemble
schemes but, by design, are given Zariski-locally by 
commutative monoids with zero elements, rather than by commutative
rings. As we show in this note, the derived category of
$\mathbb{M}_0$ is equivalent (as a triangulated category) to the
stable homotopy category of spectra;
and the description we obtain of $f^!S$, including its homology
$H_*(f^!S)$, is also a description of $g^!\mathbb{M}_0[0]$ and
its homology $H_*(g^!\mathbb{M}_0[0])$, where $f$ is the
morphism $\Spec \mathbb{Z}\stackrel{f}{\longrightarrow}
\Spec S$ and $g$ is the morphism $\Spec \mathbb{Z}
\stackrel{g}{\longrightarrow} \Spec \mathbb{M}_0$.

Our description of $H_*(f^!S)$ bears a curious relation to the ideal class group
from number theory. Let $K/\mathbb{Q}$ be a finite field extension; then 
the ideal class group $\Cl(K)$ of $K$ admits the following description as a double quotient:
\[ \Cl(K)\cong G(\mathbb{A}_K^{\infty})\backslash G(\mathbb{A}_K)/G(K),\]
where $G$ is the multiplicative group scheme, $\mathbb{A}_K$ is the adele ring of 
$K$, and $\mathbb{A}_K^{\infty}$ is the subring of $\mathbb{A}_K$ consisting of the infinite adeles
(i.e., the restricted direct product of the completions of $K$ at Archimedean places).
Let $M\mathcal{O}_K$ be the
Moore spectrum of the ring of integers $\mathcal{O}_K$ of $K$; the sphere
spectrum $S$ is the special case $S\simeq M\mathcal{O}_{\mathbb{Q}}$. We prove that one has the isomorphism
\[ H_{-1}(f^!(M\mathcal{O}_K)) \cong G(\mathbb{A}_K^{\infty})\backslash G(\mathbb{A}_K)/G(K),\]
where now $G$ is the {\em additive}, rather than multiplicative, group scheme. This isomorphism is natural in the choice of number
field $K$. In this sense, the homology of the ``dualizing complex''
$f^!(M\mathcal{O}_K)$ is a kind of additive analogue of the ideal class group of $K$. We also show that
the homology groups of $f^!(M\mathcal{O}_K)$ are trivial in all degrees $\neq -1$.

Everything
we do in this note follows from theorems proved by others; we organize those
results in a way that provides an answer to the questions asked by J. Morava, and,
we think, sheds a little bit more light on the relations between algebraic geometry, 
stable homotopy theory, and geometry over $\mathbb{F}_1$.

\section{$\mathbb{M}_0$ and spectra.}

\begin{prop}
There exists a functor $\Rings\stackrel{\beta^*}{\longrightarrow} \Monoids_0$, from the category of
commutative rings to the category of commutative monoids equipped with a zero element; this 
functor $\beta^*$ simply forgets the addition operation on the ring. This functor also has
a left adjoint $\beta$, which sends a commutative monoid $M$ with zero element ${\bf 0}$ to the monoid
ring $\mathbb{Z}[M]/({\bf 0})$.
Let $p\Sets$ be the closed symmetric monoidal category whose objects are pointed sets and whose
monoidal product is given by the smash product
\[ X\wedge Y \cong (X\times Y)/(X\vee Y),\]
the Cartesian product of $X$ and $Y$ with the one-point union of $X$ and $Y$ collapsed to the basepoint. 
Let $\Ab$ be the closed symmetric monoidal category whose objects are abelian groups and whose
monoidal product is given by the tensor product (over $\mathbb{Z}$). 
Let $\Ab\stackrel{\alpha^*}{\longrightarrow}p\Sets$ be the forgetful functor and let
$p\Sets\stackrel{\alpha}{\longrightarrow}\Ab$ be its left adjoint, which sends a pointed set $(S,*)$ to the free abelian
group $\left(\oplus_{s\in S}\mathbb{Z}\right)/(*=0)$. Then, on applying $\Comm$ (i.e., passing to the
categories of commutative monoid objects), we get the commutative
diagram of categories and functors
\[\label{categories diagram}\xymatrix{ \Comm(p\Sets)\ar[r]^{\Comm(\alpha)}\ar[d]^{\simeq} &
 \Comm(\Ab)\ar[r]^{\Comm(\alpha^*)}\ar[d]^{\simeq} &
 \Comm(p\Sets)\ar[d]^{\simeq} \\
\Monoids_0\ar[r]^{\beta} & \Rings\ar[r]^{\beta^*} & \Monoids_0}\]
where the functors marked $\simeq$ are equivalences of categories.
\end{prop}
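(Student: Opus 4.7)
The proposition bundles together several standard but separate facts, and my plan is to dispatch them in the order they are stated, treating each as a routine unwinding of universal properties.

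\medskip

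First I would verify the two adjunctions at the level of ordinary categories, before worrying about the monoidal refinement. For $\alpha \dashv \alpha^*$, the free abelian group functor on pointed sets is a textbook example: a pointed map $(S,*) \to \alpha^*(A)$ is the same as a set map $S \to A$ sending $*$ to $0$, and such a map extends uniquely to a homomorphism $\left(\oplus_{s\in S}\mathbb{Z}\right)/(*=0) \to A$. For $\beta \dashv \beta^*$, note that $\beta^*$ simply forgets addition and retains the zero as the basepoint/absorbing element. A morphism of monoids-with-zero $M \to \beta^*(R)$ is a multiplicative unit-preserving map sending ${\bf 0}_M \mapsto 0_R$; by the universal property of the monoid ring $\mathbb{Z}[M]$, this extends to a ring map $\mathbb{Z}[M] \to R$, and the condition that ${\bf 0}_M \mapsto 0_R$ is exactly the statement that this ring map factors through $\mathbb{Z}[M]/({\bf 0})$.

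\medskip

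Next I would recall that $(\Ab, \otimes_{\mathbb{Z}})$ is closed symmetric monoidal in the classical sense, and construct the closed symmetric monoidal structure on $p\Sets$. Associativity, symmetry, and unitality of $\wedge$ (with unit $S^0$) are straightforward pointwise checks, and the internal hom is given by the pointed-set of basepoint-preserving maps $\Hom_*(X,Y)$ with constant map as basepoint; the adjunction $\Hom_*(X\wedge Y, Z) \cong \Hom_*(X, \Hom_*(Y,Z))$ is the standard exponential law restricted to pointed maps.

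\medskip

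The third step is to identify $\Comm(p\Sets) \simeq \Monoids_0$ and $\Comm(\Ab) \simeq \Rings$. A commutative monoid object in $(\Ab,\otimes)$ consists of an abelian group with an associative, commutative, unital multiplication distributing over addition---that is, precisely a commutative ring. A commutative monoid object in $(p\Sets, \wedge)$ consists of a pointed set $M$ with a unital commutative multiplication $M \wedge M \to M$; unfolding the smash product, this is an ordinary commutative monoid structure on $M$ for which the basepoint is absorbing, i.e. an object of $\Monoids_0$. The inverse equivalences in each case are tautological.

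\medskip

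Finally I would check the commutativity of the square. The main observation is that, under the equivalence $\Comm(p\Sets) \simeq \Monoids_0$, the functor $\Comm(\alpha)$ sends a pointed-monoid $M$ to the abelian group $\left(\oplus_{m\in M}\mathbb{Z}\right)/({\bf 0}_M = 0)$ equipped with the ring multiplication extended bilinearly from that of $M$; this is by inspection the same as $\mathbb{Z}[M]/({\bf 0}) = \beta(M)$. Similarly, $\Comm(\alpha^*)$ applied to a ring $R$ returns the pointed set $R$ with basepoint $0$ and its multiplicative monoid structure, which is $\beta^*(R)$. The only genuinely nonformal thing to check is that $\alpha$ and $\alpha^*$ are strong (or at least lax) monoidal, so that applying $\Comm$ makes sense and produces the claimed functors; this follows from $\alpha(X \wedge Y) \cong \alpha(X) \otimes \alpha(Y)$, a short direct verification on generators. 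The bookkeeping of keeping track of the monoidal structures during this last step is the only mildly delicate part; everything else is a direct unpacking of definitions.
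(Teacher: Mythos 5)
Your proposal is correct and follows essentially the same route as the paper: identify $\Comm(p\Sets)$ with $\Monoids_0$ by observing that a multiplication with absorbing basepoint factors uniquely through the smash product, take $\Comm(\Ab)\simeq\Rings$ as standard, and match the horizontal functors by inspection. You fill in several details the paper leaves implicit---the explicit verification of the adjunctions $\alpha\dashv\alpha^*$ and $\beta\dashv\beta^*$, and the (correct and worth noting) point that $\alpha$ must be strong monoidal and $\alpha^*$ at least lax monoidal for $\Comm(\alpha)$ and $\Comm(\alpha^*)$ to be defined---but these are elaborations of the same argument rather than a different one.
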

\begin{proof}
Let $M$ be a commutative monoid with zero, with multiplication map $M\times M\stackrel{\nabla}{\longrightarrow} M$. 
Then the multiplication map
factors uniquely through the smash product:
\[\xymatrix{ M\times M\ar@/_10pt/[rr]_{\nabla} \ar[r] & 
(M\times M)/(M\vee M)\ar[r] & M,}\]
since $0\bullet m = m\bullet 0 = 0$ for any $m\in M$. So to any commutative monoid with zero we associate a commutative monoid
in $p\Sets$; this construction is natural in $M$. Given a commutative monoid $N\wedge N\stackrel{\nabla}{\longrightarrow} N$ 
in $p\Sets$, one has the structure of a commutative monoid with zero on $N$, given by letting $n\bullet n^{\prime}$ be the image 
of $(n,n^{\prime})$ under the composite
\[\xymatrix{ N\times N\ar[r] & N\wedge N\ar[r]^{\nabla} & N.}\]
This gives a monoid operation on $N$ that only fails to be defined on pairs of the form $(0,n)$ and $(n,0)$; for all such
pairs we set $n\bullet 0 = 0\bullet n = 0$. Hence the leftmost vertical map in diagram~\ref{categories diagram}
is an equivalence of categories. That the central vertical map in diagram~\ref{categories diagram} is an equivalence
of categories is well-known.

That $\Comm(\alpha^*)$ is naturally isomorphic to the functor $\beta^*$ follows immediately from inspection---they are both
forgetful functors. If $M$ is a commutative monoid with zero, then the underlying abelian group of $\beta(M)$ is 
$\alpha(M)$; so $\Comm(\alpha^*)$ is naturally isomorphic to $\beta^*$.
\end{proof}

\begin{lemma} \label{psset is spset}
{\bf (Simplicial $\mathbb{M}_0$-modules ``are'' pointed topological spaces.)} 
Let $ps\Sets$ be the category of pointed simplicial sets (i.e., simplicial
sets $X$ equipped with a choice of point in $X[0]$), and let $sp\Sets$ be the
category of simplicial pointed sets (i.e., simplicial objects in the
category of pointed sets). Then the forgetful functor
\[ sp\Sets\rightarrow ps\Sets\]
is an equivalence of categories.
\end{lemma}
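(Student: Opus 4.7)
The plan is to exhibit an explicit quasi-inverse $ps\Sets \to sp\Sets$ to the forgetful functor, and verify that both composites are (naturally) the identity. The key observation is that the simplex category $\Delta$ has $[0]$ as a terminal object: for every $n\geq 0$ there is a unique morphism $\sigma_n : [n]\to[0]$ in $\Delta$. This means that for a simplicial set $X : \Delta^{op}\to \Sets$, a chosen point $x\in X[0]$ propagates to a canonical point in every $X[n]$.

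Concretely, given a pointed simplicial set $(X,x)$, define $*_n := X(\sigma_n)(x) \in X[n]$. For any simplicial operator $\phi : [m]\to [n]$, the composite $\sigma_n\circ \phi$ must coincide with $\sigma_m$, because $\sigma_m$ is the \emph{unique} morphism $[m]\to[0]$; applying $X$ contravariantly yields $X(\phi)(*_n) = X(\phi)X(\sigma_n)(x) = X(\sigma_m)(x) = *_m$. Hence every face and degeneracy operator of $X$ preserves the chosen basepoints, so the family $(X[n],*_n)_n$ assembles into a simplicial object in $p\Sets$. Moreover, any map of pointed simplicial sets $f : (X,x)\to (Y,y)$ automatically preserves the chosen basepoints in every degree, since by naturality $f_n(*_n) = f_n X(\sigma_n)(x) = Y(\sigma_n)f_0(x) = Y(\sigma_n)(y) = *_n$. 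This defines the inverse functor on morphisms.

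To check the two composites are the identity: starting with a simplicial pointed set $(X,(*_n)_n)$, forgetting produces the pointed simplicial set $(X,*_0)$, and reapplying the inverse produces basepoints $X(\sigma_n)(*_0)$ in each degree. But $\sigma_n$ factors as a composite of iterated codegeneracies $[n]\to[n-1]\to \cdots \to [0]$, whose images under $X$ are iterated degeneracy maps, and in a simplicial pointed set these preserve the family $(*_n)_n$ by hypothesis; hence $X(\sigma_n)(*_0) = *_n$, recovering the original simplicial pointed set on the nose. The other composite is even more visibly the identity, since the inverse functor does not touch the underlying simplicial set at all, only adds the (redundant) basepoint data in each degree.

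There is no real obstacle: everything reduces to the formal observation that $[0]$ is terminal in $\Delta$, together with naturality of simplicial maps. The equivalence is in fact an isomorphism of categories, not merely an equivalence.
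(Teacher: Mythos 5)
Your proof is correct, and it takes a genuinely different route from the paper's. The paper constructs the same quasi-inverse (the basepoint in level $n$ is the $n$-fold degeneracy $(\sigma_0\circ\dots\circ\sigma_0)(x)$ of the chosen point $x\in X[0]$, which is exactly your $X(\sigma_n)(x)$), but then verifies by hand, via an induction using the simplicial identities, that the face and degeneracy maps preserve these basepoints. You replace that computation with the single structural observation that $[0]$ is terminal in $\Delta$, so that $\sigma_n\circ\phi=\sigma_m$ for every operator $\phi\colon[m]\to[n]$ and compatibility of the basepoints with \emph{all} simplicial operators, as well as with maps of pointed simplicial sets, follows from functoriality and naturality alone. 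This buys two things: it eliminates the case analysis entirely (the paper's inductive bookkeeping is delicate and easy to garble), and it makes visible the stronger conclusion you draw, namely that the basepoints in positive degrees are \emph{forced} by the basepoint in degree zero, so the forgetful functor is bijective on objects and fully faithful, hence an isomorphism of categories rather than merely an equivalence. Your verification of the round-trip composites is also correct, though strictly speaking it is redundant once you note that in any simplicial pointed set the operator $X(\sigma_n)$, being a composite of degeneracies, must already carry $*_0$ to $*_n$. One cosmetic slip: in the displayed naturality computation for a morphism $f$ you should write $f_n(*_n^X)=*_n^Y$ rather than reusing the symbol $*_n$ for the basepoints of both $X$ and $Y$.
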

\begin{proof}
A quasi-inverse for the forgetful functor is the functor sending
a pointed simplicial set $X$ with point $x\in X[0]$ to the simplicial pointed
set whose basepoint in $X[n]$ is $(\sigma_0\circ \dots \circ \sigma_0)(x)$, the
$n$-fold composite of the zeroth degeneracy maps applied to the basepoint
in $X[0]$. All that needs to be checked is that this actually defines
a simplicial pointed set, i.e., that the face and degeneracy maps
respect the basepoints. Let $x_n$ denote the basepoint 
$(\sigma_0\circ \dots \circ\sigma_0)(x) \in X[n]$, and suppose
that $\delta_jx_i = x_{i-1}$ for all $j$ and all $i<n$. Then
\begin{eqnarray*} \delta_0x_n &  = &  (\delta_0\circ \sigma_0)(x_{n-1}) \\
 & = & x_{n-1} \\
 & = & (\delta_1\circ \sigma_0)(x_{n-1}) \\
 & = & \delta_1x_n, \mbox{\ and} \\
\delta_jx_n & = & (\delta_j\circ \sigma_0)(x_{n-1}) \\
 & = & (\sigma_0\circ \delta_{j-1})(x_{n-1}) \\
 & = & \sigma_0(x_{n-2}) \\
 & = & x_{n-1},\end{eqnarray*}
for any $j>1$, by the simplicial identities and our hypotheses; by induction,
the face maps $\delta_i$ respect the basepoints $\{ x_n\}$. Now suppose that
$\sigma_jx_i = x_{i-1}$ for all $j$ and all $i<n$. Then 
\begin{eqnarray*}  \sigma_jx_n & = & \sigma_j\sigma_0x_{n-1} \\
 & = & \sigma_0\sigma_{j-1}x_{n-1} \\
 & = & \sigma_0\sigma_0\sigma_{j-2} x_{n-2} \\
 & = & \dots \\
 & = & (\sigma_0 \dots \sigma_0)(x_{n-j}) \\
 & = & x_{n+1} \end{eqnarray*}
for any $j>0$, by the simplicial identities and our hypotheses; so
the degeneracy maps $\sigma_i$ respect the basepoints $\{ x_n\}$.
\end{proof}

Now we will begin to speak of derived categories; in this section,
we adopt the convention that all our
complexes are chain complexes rather than cochain complexes, i.e., the differentials 
are of the form $C_i\rightarrow C_{i-1}$; they lower degree, rather than raising degree. 
By a ``bounded-below'' chain complex we mean 
a chain complex $C_\cdot$ such that there exists an integer $M$ such that 
$C_m\cong 0$ for all $m < M$.

We would like to say something meaningful about the derived
category $D(\mathbb{M}_0)$ of complexes
of $\mathbb{M}_0$-modules. We can't speak
of the category of chain complexes of $\mathbb{M}_0$-modules,
since $\mathbb{M}_0$-modules are simply pointed sets, and the 
category $p\Sets$ is not an abelian category; but 
by the Dold-Kan theorem, whenever $\mathcal{C}$
is an abelian category, we have an Quillen equivalence 
between chain complexes, concentrated in nonnegative degrees,
of objects in $\mathcal{C}$,  with
the projective model structure,
and simplicial objects in $\mathcal{C}$ with model structure
coming from the standard model structure on simplicial sets. So we want
a description of the operation that takes a commutative ring $R$ to the
derived category $D(R)$ which does not use the fact that the category of
$R$-modules is abelian, but perhaps uses the model structure on simplicial
$R$-modules instead.

Here is how we arrive at such a description, which is largely inspired by Lurie's treatment of
derived categories in \cite{dagi} (although this description does not use Lurie's 
$\infty$-categorical technology): 
let $\mathcal{C}$ be any concrete category (i.e., a category equipped with a
forgetful functor to the category of sets), not necessarily an abelian category. 
Then we have a model structure on $s\mathcal{C}$, the 
category of simplicial objects in $\mathcal{C}$, given by letting a morphism in $s\mathcal{C}$
be a fibration if the underlying map of simplicial sets is a fibration, and likewise for
weak equivalences and cofibrations. Then let $S^+(s\mathcal{C})$ be the
category of {\em bounded-below spectrum objects in $s\mathcal{C}$}: this is the category of 
functors $\mathbb{Z}\times\mathbb{Z}\stackrel{F}{\longrightarrow} s\mathcal{C}$ 
satisfying the following conditions:
\begin{enumerate}
\item $F$ preserves finite homotopy limits,
\item for any object $(m,n)\in\mathbb{Z}\times\mathbb{Z}$ with $m\neq n$, the object $F(m,n)$ is
weakly equivalent to a final object of $s\mathcal{C}$, 
\item for any object $(m,m)\in \mathbb{Z}\times\mathbb{Z}$, the object $F(m,m)$ is a fibrant
object of $s\mathcal{C}$, and
\item there exists some integer $M$ such that $F(m,m)$ is weakly equivalent to a final object
of $s\mathcal{C}$ for every $m<M$.
\end{enumerate}
Here by $\mathbb{Z}$ we mean the integers with their usual partial ordering, considered as a category.
The practical significance of this definition is that any such functor $F$ specifies a homotopy 
pullback square 
\[ \xymatrix{ F(m,m) \ar[r]\ar[d] & \pt \ar[d] \\ \pt\ar[r] & F(m+1,m+1) }\]
for each $m\in\mathbb{Z}$, i.e., a choice of weak equivalence $F(m,m)\simeq \Omega F(m+1,m+1)$,
and $F(m,m)$ is trivial for $m << 0$; when $\mathcal{C}$ is the category of pointed sets, after passage to
the homotopy category, this definition
recovers the typical homotopy category of bounded-below $\Omega$-spectra.
On the other hand, when $\mathcal{C}$ is the category of $R$-modules, where $R$ is some commutative ring, then
the model category $s\mathcal{C}$ is (by the Dold-Kan theorem) equivalent to the category of 
chain complexes of $R$-modules concentrated in nonnegative degrees, with the projective model structure;
fibrant objects in this model category are those in which every $R$-module in the complex is a projective
$R$-module, while weak equivalences are quasiisomorphisms, 
so the category $S^+(s\mathcal{C})$ of bounded-below spectra in $R$-modules is, after passage to
the homotopy category, the usual bounded-below derived category of $R$-modules; that is,
$\Ho S^+(s\Mod(R))\simeq D^+(R)$. Hence, if $\mathcal{C}$ is a concrete category, we regard
$\Ho S^+(s\mathcal{C})$ as the {\em bounded-below derived category of $\mathcal{C}$}, and we
may write $D^+(\mathcal{C})$ for it.
If $\mathcal{C}$ is a category of modules over some ring- or monoid-like 
object $M$ then we may also write $D^+(M)$ for $D^+(\mathcal{C})$, a useful abuse of notation.

\begin{prop} The bounded-below derived category $D^+(\mathbb{M}_0)$ of 
$\mathbb{M}_0$-modules is equivalent
to the stable homotopy category of bounded-below spectra.
\end{prop}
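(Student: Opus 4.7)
The plan is to unwind the definition of $D^+(\mathbb{M}_0)$ and reduce to the observation (noted parenthetically in the discussion immediately preceding the proposition) that bounded-below spectrum objects in simplicial pointed sets form the classical homotopy category of bounded-below $\Omega$-spectra. First I would identify the category of $\mathbb{M}_0$-modules with $p\Sets$: since $\mathbb{M}_0$ is the initial commutative monoid with a zero element, an $\mathbb{M}_0$-module is just a pointed set, so $s\Mod(\mathbb{M}_0) = sp\Sets$.

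Next I would apply Lemma~\ref{psset is spset} to replace $sp\Sets$ by the equivalent category $ps\Sets$ of pointed simplicial sets, which carries the standard Kan--Quillen model structure that the recipe for $S^+(-)$ requires. Unwinding the definition,
\[ D^+(\mathbb{M}_0) = \Ho S^+(s\Mod(\mathbb{M}_0)) \simeq \Ho S^+(ps\Sets). \]

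The last step is to identify $\Ho S^+(ps\Sets)$ with the stable homotopy category of bounded-below spectra. Condition (3) forces each $F(m,m)$ to be fibrant, and condition (1) produces, via the homotopy pullback square displayed just before the proposition, a weak equivalence $F(m,m) \xrightarrow{\sim} \Omega F(m+1,m+1)$; combined with the vanishing condition (4), this is exactly the data of a bounded-below $\Omega$-spectrum of pointed simplicial (equivalently, topological) spaces. Morphisms in $\Ho S^+(ps\Sets)$ are computed as homotopy classes of such functors, which by standard model-categorical arguments are the same as homotopy classes of maps of $\Omega$-spectra.

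The main obstacle is the very last identification, namely that this ad hoc construction from functors $\mathbb{Z}\times\mathbb{Z} \to ps\Sets$ really recovers the stable, as opposed to the levelwise, homotopy category of bounded-below spectra. This reduces to the well-known fact that among $\Omega$-spectra the levelwise weak equivalences coincide with the stable weak equivalences, and that every bounded-below spectrum admits a weakly equivalent bounded-below $\Omega$-spectrum model by fibrant replacement. Once this is in hand, chaining the three equivalences above yields the desired equivalence of triangulated categories.
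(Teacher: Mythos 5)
Your proposal is correct and follows essentially the same route as the paper's own proof: identify $\Mod(\mathbb{M}_0)$ with $p\Sets$, use Lemma~\ref{psset is spset} to pass from $sp\Sets$ to $ps\Sets$ with its standard model structure, and then invoke the identification of $\Ho S^+(ps\Sets)$ with the homotopy category of bounded-below $\Omega$-spectra. Your final paragraph simply makes explicit the justification the paper leaves to its parenthetical remark preceding the proposition.
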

\begin{proof}
The category of $\mathbb{M}_0$-modules is precisely the category $p\Sets$ of pointed sets.
The category $sp\Sets$ of simplicial pointed sets is equivalent to the category 
$ps\Sets$ of pointed simplicial sets, by Lemma~\ref{psset is spset}, and this equivalence
is easily seen to preserve the model structures; so $D^+(\mathbb{M}_0)
\simeq \Ho S^+(ps\Sets)$, the homotopy category of bounded-below
spectrum objects in pointed Kan complexes, which is equivalent to the
homotopy category of bounded-below $\Omega$-spectra.
\end{proof}

We also want to describe the effects of the functors 
$\Ab\stackrel{\alpha^*}{\longrightarrow}p\Sets$ and
$p\Sets\stackrel{\alpha}{\longrightarrow}\Ab$
on the level of derived categories. These two functors jointly 
describe a morphism 
\[ \Spec \mathbb{Z}\stackrel{g}{\longrightarrow}\Spec \mathbb{M}_0\]
such that $g_* = \alpha^*$ and $g^*=\alpha$; we want to understand the morphisms
$D^+(\mathbb{Z})\stackrel{g_*}{\longrightarrow} D^+(\mathbb{M}_0)$
and
$D^+(\mathbb{M}_0)\stackrel{g^*}{\longrightarrow} D^+(\mathbb{Z})$
induced on the derived categories. Taking a simplicial pointed set,
taking the free simplicial abelian group it generates, and identifying the
subgroups generated by the basepoints to zero, this is (after using the Quillen
equivalence of simplicial pointed sets with pointed CW complexes as well as the Dold-Kan 
Quillen equivalence of
simplicial abelian groups with chain complexes of abelian groups concentrated in nonnegative degrees,  
and passing to the homotopy category)
equivalent to taking the pointed simplicial chain complex of a pointed simplicial complex; 
so, identifying $D^+(\mathbb{M}_0)$ with the bounded-below stable homotopy category, $g_*$ sends a spectrum to (the quasi-equivalence class of) 
its singular chain complex. Similarly, given a chain complex of abelian groups $C^\bullet$ concentrated
in nonnegative degrees,
using the Dold-Kan theorem to produce a simplicial abelian group from it, and forgetting the group
structure to get a simplicial pointed set, this process 
yields the generalized Eilenberg-Maclane spectrum
$HC^{\bullet}$; this describes $g^*$. As a result, we have
\begin{prop} The derived adjoint functors
\begin{eqnarray*} 
D^+(\mathbb{Z}) & \stackrel{g_*}{\longrightarrow} & D^+(\mathbb{M}_0),\\
D^+(\mathbb{Z}) & \stackrel{g^*}{\longleftarrow} & D^+(\mathbb{M}_0)\end{eqnarray*}
fit into the commutative diagram of triangulated categories
\[\xymatrix{ 
D^+(\mathbb{Z})\ar[r]^{g_*}\ar[d]^{\id} & 
 D^+(\mathbb{M}_0)\ar[r]^{g^*}\ar[d]^{\simeq} &
  D^+(\mathbb{Z})\ar[d]^{\id} \\
D^+(\mathbb{Z})\ar[r]^(.4){H} & 
 \Ho(\Mod(S)^+)\ar[r]^(.6){\sing} &
  D^+(\mathbb{Z}),}\]
where $\Ho(\Mod(S)^+)$ is the stable homotopy category of bounded-below spectra,
$H$ is the Eilenberg-Maclane functor, and $\sing$ is the singular chain complex functor.
\end{prop}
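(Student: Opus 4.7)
The plan is to trace the adjoint pair $(g^*,g_*)=(\alpha,\alpha^*)$ level-wise through the chain of Quillen equivalences underlying the identification $D^+(\mathbb{M}_0)\simeq \Ho(\Mod(S)^+)$ just established. The two required identifications, $g_*\simeq H$ and $g^*\simeq \sing$, are essentially read off from the prose preceding the statement; the present proof is mostly a matter of organizing those observations and verifying that the relevant operations commute with passage to the homotopy categories.

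For the left square, I apply $g_*=\alpha^*$ to a bounded-below chain complex of abelian groups $C^\bullet$. By Dold-Kan applied in each spectrum level, $C^\bullet$ corresponds to a bounded-below spectrum of simplicial abelian groups whose $n$-th level is the Eilenberg-Maclane simplicial abelian group $K(C^\bullet,n)$. Forgetting the abelian-group structure at each level via $\alpha^*$ produces exactly the Eilenberg-Maclane spectrum $HC^\bullet$. For the right square, I apply $g^*=\alpha$ level-wise to a bounded-below $\Omega$-spectrum $X=\{X_n\}$ of pointed simplicial sets: at each level $\alpha X_n$ is the reduced free simplicial abelian group on $X_n$ (basepoint sent to $0$), which under Dold-Kan corresponds to the reduced simplicial chain complex $\widetilde{C}_\bullet(X_n)$. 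Assembling the bonding maps and stabilizing then recovers a representative of the reduced singular chain complex of $X$, i.e.\ $\sing X$.

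The one point that genuinely needs to be checked is that these level-wise constructions descend to the spectrum-object categories $S^+(s(-))$ used to define $D^+$. For $\alpha^*$ this is immediate, since $\alpha^*$ is a right adjoint and so preserves all (homotopy) limits; in particular it preserves the $\Omega$-spectrum structure and the finite-homotopy-limit condition (1) in the definition of $S^+$. For $\alpha$, which is only a left adjoint, the analogous preservation is available because $\alpha$ is a left Quillen functor between the standard model structures on simplicial pointed sets and simplicial abelian groups, so it has a well-behaved left derived functor compatible with the passage to spectrum objects (after fibrant replacement). I expect this compatibility with stabilization to be the only genuinely non-trivial step; with it in hand, the commutativity of both squares is tautological from the level-wise descriptions of $\alpha$ and $\alpha^*$ given above.
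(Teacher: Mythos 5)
Your proposal is correct and follows essentially the same route as the paper, whose argument is the informal paragraph preceding the proposition: identify $g_*=\alpha^*$ levelwise (via Dold--Kan) with the generalized Eilenberg--Maclane construction and $g^*=\alpha$ levelwise with the reduced simplicial chain complex, then pass through the Quillen equivalences to the homotopy categories. You are in fact somewhat more careful than the paper on the one genuinely delicate point --- that the left adjoint $\alpha$ does not preserve the $\Omega$-spectrum condition levelwise and must be followed by spectrification/stabilization --- and you also implicitly correct a transposition of the labels $g_*$ and $g^*$ that occurs in the paper's own prose.
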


The derived adjunction of $g^*$ and $g_*$,
\[ \hyperExt^0_{\mathbb{Z}}(\sing(X)_\bullet,C_\bullet) \cong [X,HC_\bullet],\]
where $\hyperExt^0$ is the zeroth hyper-Ext functor, has as special cases the representability
of cohomology with coefficients in abelian group $A$ by the Eilenberg-Maclane
spectrum $HA$ (this happens when $C_{\bullet} \simeq A[0]$), and
the fact that the homotopy groups of the generalized Eilenberg-Maclane
spectrum of a complex $C_{\bullet}$ recover the homology groups of $C_{\bullet}$, that is,
$\pi_{*}(HC^\bullet) \cong H_*(C_{\bullet})$ (this happens when $X$ is homotopy equivalent to a sphere, 
since in that case the spectral sequence
\[ E_2^{*,*}\cong \Ext_{\mathbb{Z}}^*(H_{*}(S^n;\mathbb{Z}),H_*C_{\bullet})\Rightarrow 
\hyperExt_{\mathbb{Z}}^*(\sing(S^n)_\bullet,C_\bullet)\]
collapses on to a single line, $\Ext_{\mathbb{Z}}^0(H_n(S^n;\mathbb{Z}),H_*(C_\bullet))\cong
H_*(C_\bullet)$, at $E_2$).

It is worth making some remarks about whether or not
$\Spec \mathbb{Z}\stackrel{g}{\longrightarrow} \Spec \mathbb{M}_0$ is an effective descent morphism.
Given a pointed set $S$, the abelian group $\alpha S$ naturally has the structure map
$\alpha S\rightarrow \alpha \alpha^* \alpha S$ of a coalgebra
for the comonad $\alpha\alpha^*$. The question is whether the resulting functor
from pointed sets to $\alpha\alpha^*$-coalgebras is an equivalence of categories; if so, then $g$
is said to be an ``effective descent morphism'' (for this same story told in the classical
setting of schemes, see \cite{MR0255631}). By the weak Beck's theorem 
(see \cite{MR1712872}), one knows that 
$g$ is an effective descent morphism if and only if $\alpha$ preserves and reflects equalizers; it
is a very elementary exercise to verify that it indeed does. So $g$ is an effective descent morphism.
However, it is a much deeper matter to ask whether the derived adjunction
$g_*, g^*$ defines an effective descent morphism; naively, this would be asking whether
one has an equivalence of stable homotopy categories between bounded-below spectraand $g_*g^*$-coalgebras; or, in other words, can one recover a bounded-below spectrum 
$X$ from the coaction map $X\wedge H\mathbb{Z}\rightarrow X\wedge H\mathbb{Z}\wedge H\mathbb{Z}$.
The answer is yes for connective $X$, since this is enough information for one to write down the rest of the 
$H\mathbb{Z}$-Adams resolution of $X$, whose totalization recovers $X$.
So the question of whether $g_*$ and $g^*$ define a derived effective descent morphism,
this is really the question of what the $H\mathbb{Z}$-Adams spectral sequence converges to, i.e.,
identifying the $H\mathbb{Z}$-nilpotent completion functor. In general, questions about effective
descent for morphisms of derived categories are really questions about $E$-nilpotent completion
and generalized Adams spectral sequences. See \cite{hessdescent}
for more on these matters.

\section{Triangulated $\Spec \mathbb{Z}$ and $\Spec S$.}

Following Grothendieck, we work with categories of modules over 
rings or schemes, rather than the rings or schemes themselves; and furthermore, 
in this section, we want to explore the situation when we take the bounded-below derived category
$D^+(\mathbb{Z})$, and not simply the category of $\mathbb{Z}$-modules, as the real object
of interest when working with $\Spec \mathbb{Z}$, so in this section,
when we write $\Spec S$, we mean the homotopy category $\Ho(\Mod(S)^+)$ 
of connective $S$-modules (that is, $S$-modules whose homotopy
groups vanish in sufficiently low dimensions), of \cite{MR1417719}; 
when we write $\Spec \mathbb{Z}$, we have in mind the category 
$D^+(\mathbb{Z})$; and when we speak of a morphism
\begin{equation}\label{Spec S under Spec Z with triangulations} 
\Spec\mathbb{Z}\stackrel{f}{\longrightarrow} \Spec S, \end{equation}
we mean a pair of functors
\begin{eqnarray*} 
\Mod^+(S) & \stackrel{f^*}{\longrightarrow} & D^+(\mathbb{Z}),\\
\Mod^+(S) & \stackrel{f_*}{\longleftarrow} & D^+(\mathbb{Z}),\end{eqnarray*}
with $f_*$ fully faithful and 
with $f^*$ left adjoint to $f_*$ after passage to derived categories:
\[ \hyperExt^*_{\mathbb{Z}}(f^*(X),C_{\bullet}) \cong [ \Sigma^{-*}X,f_*C_{\bullet} ].\]
Here $\hyperExt$ is the hyper-Ext functor. Clearly we want $f_*$ to be the 
generalized Eilenberg-Maclane spectrum functor $H$, 
and as a consequence $f^*$ has to be the singular chain complex functor. Of course,
as a result of Shipley's theorem (see \cite{MR2306038}) identifying the
homotopy category of bounded-below chain complexes of abelian groups with the
homotopy category of bounded-below $H\mathbb{Z}$-modules, the functor $f^*$ should be 
thought of
as base change to $H\mathbb{Z}$---that is, the functor $X\mapsto X\wedge H\mathbb{Z}$.

Now we move on to Grothendieck duality.
Given a map of schemes $X\stackrel{f}{\longrightarrow} Y$, \linebreak ``Grothendieck duality 
for $f$'' is the existence of a functor 
$D^+(\mathcal{O}_Y)_{\qcoh}\stackrel{f^!}{\longrightarrow} D^+(\mathcal{O}_X)_{\qcoh}$
which is right adjoint to the functor $D^+(\mathcal{O}_X)_{\qcoh}\stackrel{Rf_*}
{\longrightarrow} D^+(\mathcal{O}_Y)_{\qcoh}$
induced by $f_*$ on the derived categories:
\[ Rf_*R\hom_{\mathcal{O}_X}(C_{\bullet}, f^!D_{\bullet})\cong 
R\hom_{\mathcal{O}_Y}(Rf_*C_{\bullet}, D_{\bullet}).\]
When $Y\cong \Spec k$ for some field $k$ and $f$ is finite type, the functor
$D^+(\mathcal{O}_X)\stackrel{I}{\longrightarrow} D^+(\mathcal{O}_X)^{\op}$ given by
\[ IC_{\bullet} = R\hom_{\mathcal{O}_X}(C_{\bullet},f^!(\mathcal{O}_Y[0])),\]
(where $\mathcal{O}_Y[0]$ is the complex consisting of $\mathcal{O}_Y$ in degree
zero and the trivial module in all other degrees) satisfies the isomorphism
$IIC_{\bullet}\cong C_{\bullet}$ for sufficiently small (``dualizable'') complexes
$C_{\bullet}$; this is the sense in which Grothendieck duality is really
a ``duality.''

We wish to describe such a situation for the morphism 
$\Spec \mathbb{Z}\stackrel{f}{\longrightarrow}\Spec S$, i.e., we want a
derived right adjoint for $f_*$. This takes the form of a functor
\[ \Ho(\Mod(S)^+)  \stackrel{f^!}{\longrightarrow}  D^+(\mathcal{Z})\] 
from the homotopy category of bounded-below spectra to the bounded-below derived
category of $\mathbb{Z}$-modules, such that we have the isomorphism
\[ \Ext^*_{\mathbb{Z}}(C_{\bullet},f^!X) \cong [\Sigma^{-*}HC_{\bullet}, X],\]
natural in the choice of chain complex $C_{\bullet}$ and spectrum $X$. 

That $f^!$ exists can be shown in more than one way: in \cite{MR1417719} it is
shown that $f^!$ actually exists as a functor from $S$-modules to $HZ$-modules, as 
a right adjoint to the forgetful functor, and $f^!(X)$ is isomorphic to 
the function spectrum $F(HZ,X)$; this fits with the definition of $f^!$ for a
finite morphism of schemes, in \cite{MR0222093}. Another method
for proving the existence of $f^!$ is Neeman's proof in \cite{MR1308405}, 
using the Brown representability
theorem for triangulated categories: since the Eilenberg-Maclane spectrum functor
$D(\mathbb{Z})\stackrel{H}{\longrightarrow} \Ho(S)$ 
is a functor defined on a compactly generated triangulated category, taking
values in a triangulated category, and which preserves coproducts, the existence
of a right adjoint for $H$ is automatic (and indeed, is even defined on the unbounded
derived category). Neeman used this argument to produce $f^!$ for a map of schemes $f$
but it works equally well to produce $f^!$ for the morphism
$\Spec \mathbb{Z}\stackrel{f}{\longrightarrow}\Spec S$. Of course, by the argument of the previous 
section, the functors $f_*$ and $f^*$ (which we regard as defining a morphism
$\Spec \mathbb{Z}\stackrel{f}{\longrightarrow} \Spec S$) coincide with the derived adjoint 
functors $g_*$ and $g^*$ between $D^+(\mathbb{Z})$ and $D^+(\mathbb{M}_0)$; so the 
Grothendieck duality we describe here for $f$ is equally well a Grothendieck duality
for $\Spec \mathbb{Z}\stackrel{g}{\longrightarrow}\Spec\mathbb{M}_0$.

\begin{prop} \label{homology of dualizing complex} 
Let $f$ be the morphism
\[ \Spec \mathbb{Z}\stackrel{f}{\longrightarrow}\Spec S ,\]
with the definition we have given immediately following 
morphism~\ref{Spec S under Spec Z with triangulations}.
Then the homology groups of the chain complex 
$f^!(S)$ are concentrated in a single degree. Specifically:
\begin{eqnarray*} H_i(f^!(S)) & \cong & [\Sigma^{i} H\mathbb{Z}, S] \\
  & \cong & \left\{ \begin{array}{ll} 
                      \Ext_{\mathbb{Z}}^1(\mathbb{Q},\mathbb{Z}) 
                           & \mbox{\ if\ } i=-1 \\
                      0    & \mbox{\ otherwise.} \end{array}\right. \end{eqnarray*}

The complex $f^!(S)$ has trivial homology in all degrees when pulled back to
any closed point of $\Spec \mathbb{Z}$. In other words,
if $j$ is the morphism
\[ \Spec \mathbb{F}_p\stackrel{j}{\longrightarrow}\Spec S ,\]
then 
\begin{eqnarray*} H_i(j^!(S)) & \cong & [\Sigma^{i} H\mathbb{F}_p, S] \\
  & \cong & 0.\end{eqnarray*} 
\end{prop}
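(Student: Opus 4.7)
The plan is to reduce the computation, via the derived adjunction recorded above, to a known calculation of stable homotopy classes of maps out of Eilenberg--Maclane spectra. Setting $C_\bullet = \mathbb{Z}[0]$ (respectively $\mathbb{F}_p[0]$) and $X = S$ in the adjunction $\Ext^{*}_\mathbb{Z}(C_\bullet, f^! X) \cong [\Sigma^{-*} HC_\bullet, X]$, and using that $\Ext^{-i}_\mathbb{Z}(\mathbb{Z}, C_\bullet) = H_i(C_\bullet)$ since $\mathbb{Z}$ is projective, immediately yields the first isomorphism in each line of the statement. What remains is to compute the right-hand sides.

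For $[\Sigma^i H\mathbb{Z}, S]$, I would exploit the short exact sequence $0\to \mathbb{Z}\to\mathbb{Q}\to\mathbb{Q}/\mathbb{Z}\to 0$ of abelian groups, which induces a cofiber sequence $H\mathbb{Z}\to H\mathbb{Q}\to H(\mathbb{Q}/\mathbb{Z})$ of spectra. Applying the function spectrum $F(-,S)$ yields a fiber sequence whose long exact sequence on homotopy groups relates $[\Sigma^i H\mathbb{Z},S]$ to $[\Sigma^i H\mathbb{Q},S]$ and $[\Sigma^i H(\mathbb{Q}/\mathbb{Z}),S]$. The $H\mathbb{Q}$ term is the easy ingredient: Serre's finiteness theorem identifies $H\mathbb{Q}$ with $S_{\mathbb{Q}} = \mathrm{hocolim}(S \stackrel{2}{\longrightarrow} S \stackrel{3}{\longrightarrow} S \stackrel{4}{\longrightarrow} \cdots)$, and Milnor's sequence then gives
\[ 0 \to {\lim}^1_n \pi_{i+1}(S) \to [\Sigma^i H\mathbb{Q}, S] \to \lim_n \pi_i(S) \to 0, \]
with tower maps multiplication by $n$. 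Finiteness of $\pi_n(S)$ for $n\geq 1$ together with Mittag--Leffler kills both $\lim$ and ${\lim}^1$ except when $i=-1$, where $\pi_0(S)=\mathbb{Z}$ gives $\lim=0$ and ${\lim}^1\mathbb{Z}=\Ext^1_\mathbb{Z}(\mathbb{Q},\mathbb{Z})$ from the standard presentation $\mathbb{Q}=\mathrm{colim}_n\mathbb{Z}$.

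The main obstacle is to show $F(H(\mathbb{Q}/\mathbb{Z}),S)$ is contractible, which together with the LES completes the computation. My strategy would be to decompose $\mathbb{Q}/\mathbb{Z}=\bigoplus_p\mathbb{Z}[p^\infty]$, write each $H\mathbb{Z}[p^\infty]=\mathrm{hocolim}_n H\mathbb{Z}/p^n$, and reduce to showing $[\Sigma^i H\mathbb{Z}/p^n,S]=0$. Applying the LES to the cofiber sequence $H\mathbb{Z}\stackrel{p^n}{\longrightarrow} H\mathbb{Z}\to H\mathbb{Z}/p^n$, this in turn follows from the numerical fact that multiplication by $p$ is an isomorphism on $\Ext^1_\mathbb{Z}(\mathbb{Q},\mathbb{Z})\cong\hat{\mathbb{Z}}/\mathbb{Z}$ (verified by a direct computation using the profinite description), combined with the already-established vanishing of $[\Sigma^i H\mathbb{Z},S]$ in other degrees. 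Since this bootstrap has a circular flavor, the cleanest way to carry it out would be by citing a classical identification of $F(H\mathbb{Z},S)$ of the sort found in work of Bousfield or Margolis, which avoids the apparent circularity.

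For the second statement, the cofiber sequence $H\mathbb{Z}\stackrel{p}{\longrightarrow} H\mathbb{Z}\to H\mathbb{F}_p$ and the LES it induces on $[-,S]$ immediately yield $[\Sigma^i H\mathbb{F}_p,S]=0$ in all degrees, because multiplication by $p$ is an isomorphism on $[\Sigma^i H\mathbb{Z},S]$ for every $i$: trivially so when $[\Sigma^i H\mathbb{Z},S]=0$, and bijectively on $\hat{\mathbb{Z}}/\mathbb{Z}$ when $i=-1$.
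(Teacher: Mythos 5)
Your reduction of $H_i(f^!(S))$ to $[\Sigma^i H\mathbb{Z},S]$ via the derived adjunction and the collapse of the hyper-Ext spectral sequence is exactly the paper's first step. After that the routes diverge: the paper invokes the theorem of Lin cited in its proof, which says that for $A$ cyclic and $X$ a finite cell complex one has $[\Sigma^i HA,X]\cong \Ext^1_{\mathbb{Z}}(A\otimes_{\mathbb{Z}}\mathbb{Q},H_{i+1}(X;\mathbb{Z}))$, and reads off both halves of the proposition at once (taking $A=\mathbb{Z}$ and $A=\mathbb{F}_p$, the latter killed by $\mathbb{F}_p\otimes\mathbb{Q}=0$). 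You instead run the arithmetic fracture $H\mathbb{Z}\to H\mathbb{Q}\to H(\mathbb{Q}/\mathbb{Z})$ through $F(-,S)$. Your treatment of the rational piece is correct and is genuinely illuminating: identifying $H\mathbb{Q}$ with $S_{\mathbb{Q}}$ and using the Milnor sequence exhibits $\Ext^1_{\mathbb{Z}}(\mathbb{Q},\mathbb{Z})$ concretely as ${\lim}^1$ of the tower $(\pi_0(S)=\mathbb{Z},\ \times n)$, something the paper's citation of Lin leaves opaque.

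The gap is exactly where you suspect it: the contractibility of $F(H(\mathbb{Q}/\mathbb{Z}),S)$, equivalently the vanishing of $[\Sigma^* H\mathbb{Z}/p^n,S]$. Your bootstrap for this is irreparably circular as written: the long exact sequence for $H\mathbb{Z}\stackrel{p^n}{\longrightarrow}H\mathbb{Z}\to H\mathbb{Z}/p^n$ only trades the desired vanishing for knowledge of $[\Sigma^*H\mathbb{Z},S]$, which is the quantity under computation, and no rearrangement of cofiber sequences among $H\mathbb{Z}$, $H\mathbb{Q}$, $H\mathbb{Z}/p^n$ will break the circle, because the vanishing of maps from torsion Eilenberg-Maclane spectra to finite spectra is a genuine theorem (proved by inverse-limit Adams spectral sequence arguments), not a formal consequence of the rational computation. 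The citation you propose as the escape hatch (Margolis, or equivalently the $A=\mathbb{Z}/p^n$ case of Lin's theorem) is precisely the external input the paper already uses; once you import it, your argument closes up and is correct, but at that point you have cited the same result that yields the whole proposition in one line. Your derivation of the second statement, $[\Sigma^i H\mathbb{F}_p,S]=0$, from the first via the multiplication-by-$p$ cofiber sequence and the observation that $\Ext^1_{\mathbb{Z}}(\mathbb{Q},\mathbb{Z})$ is a $\mathbb{Q}$-vector space is sound and non-circular once the first statement is in hand.
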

\begin{proof}
Recall the theorem of Lin \cite{MR0402738}: for $A$ a
cyclic abelian group and $X$ a finite cell complex, we have the isomorphism
\[ [\Sigma^iHA,X] \cong \Ext^1_{\mathbb{Z}}(A\otimes_{\mathbb{Z}}\mathbb{Q},
 H_{i+1}(X;\mathbb{Z})).\]
Since $f^!$ is, by definition, a derived right adjoint for $f_* = H$, we have
the isomorphism
\[ \hyperExt_{\mathbb{Z}}^i(C_{\bullet},f^!S) \cong [\Sigma^{-i} HC_{\bullet}, S].\]
Let $\mathbb{Z}[0]$ be the complex with $\mathbb{Z}$ in degree $0$ and the trivial
abelian group in all other degrees. Then the spectral sequence
\[ \Ext_{\mathbb{Z}}^s(\mathbb{Z},H_tf^!S) \Rightarrow 
\hyperExt_{\mathbb{Z}}^{s-t}(\mathbb{Z}[0],f^!S)\]
collapses on to the $s=0$ line, so we get isomorphisms
\begin{eqnarray*}
 H_tf^!S & \cong & \hom_{\mathbb{Z}}(\mathbb{Z},H_tf^!S) \\
 & \cong & \hyperExt_{\mathbb{Z}}^{-t}(\mathbb{Z}[0],f^!S) \\
 & \cong & [\Sigma^{t}H\mathbb{Z},S],\end{eqnarray*}
which, by Lin's theorem, is $\Ext^1_{\mathbb{Z}}(\mathbb{Q},\mathbb{Z})$
when $t=-1$, and zero otherwise.

The spectral sequence 
\[ \Ext_{\mathbb{F}_p}^s(\mathbb{F}_p,H_tj^!S) \Rightarrow 
\hyperExt_{\mathbb{F}_p}^{s-t}(\mathbb{F}_p[0],j^!S)\]
also collapses on to the $s=0$ line, so we get isomorphisms
\begin{eqnarray*}
 H_tj^!S & \cong & \hom_{\mathbb{F}_p}(\mathbb{F}_p,H_tj^!S) \\
 & \cong & \hyperExt_{\mathbb{F}_p}^{-t}(\mathbb{F}_p[0],j^!S) \\
 & \cong & [\Sigma^tH\mathbb{F}_p,S],\end{eqnarray*}
which is zero for all $t$, again by Lin's theorem.
\end{proof}

We now give a description of $H_{-1}(f^!S)$ which is interesting for number-theoretic
reasons. If $K$ is a number field and $G$ is an algebraic group, one frequently encounters the groups
\[ \mathcal{O}\backslash G(\mathbb{A}_K) / G(K),\]
where $\mathbb{A}_K$ is the adele ring of $K$ (i.e., the restricted direct product of the 
completions of $K$ at all its places)
and $\mathcal{O}$ is a compact open subgroup of 
$G(\mathbb{A}_K^{\infty})$, the evaluation of $G$ at the
infinite adele ring of $K$. One recovers the complex points of
Shimura varieties in this way; in particular, when $G\cong GL_1$, the multiplicative
group scheme, and $\mathcal{O} = G(\mathbb{A}_K^{\infty})$, then
the group $\mathcal{O}\backslash G(\mathbb{A}_K) / G(K)$
is isomorphic to the ideal class group of $K$. We identify
the nonvanishing homology group $H_{-1}(f^!S)$ of the Grothendieck dualizing complex 
arising from the 
sphere spectrum as an additive analogue of the ideal class group:

\begin{prop}
Let $\mathcal{NF}$ be the category of finite extensions of $\mathbb{Q}$, and let $\Ab$ be the category
of abelian groups. Then the functor 
\begin{eqnarray*} \mathcal{NF} & \rightarrow & \Ab \\
K & \mapsto & G(\mathbb{A}_K^{\infty})\backslash G(\mathbb{A}_K) / G(K), \end{eqnarray*} 
where $G$ is the additive group scheme, $\mathbb{A}_K$ is the adele ring of $K$,
and $\mathbb{A}_K^{\infty}$ is the ring of infinite adeles of $K$ (i.e., 
the restricted direct product of the completions of the
$K$ at its Archimedean places),
is naturally isomorphic to the functor
\begin{eqnarray*} \mathcal{NF} & \rightarrow & \Ab \\
K & \mapsto & H_{-1}(f^!(M\mathcal{O}_K)), \end{eqnarray*} 
where $M\mathcal{O}_K$ is the Moore spectrum of the ring of integers $\mathcal{O}_K$ of $K$.
\end{prop}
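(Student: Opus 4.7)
The plan is to compute both sides of the proposed natural isomorphism independently and exhibit a natural common value; the abelian group $\mathcal{O}_K\otimes_{\mathbb{Z}}(\hat{\mathbb{Z}}/\mathbb{Z})$ will serve as the meeting point.

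For $H_{-1}(f^!(M\mathcal{O}_K))$, I would repeat the spectral sequence argument of Proposition~\ref{homology of dualizing complex} verbatim, with the sphere spectrum $S$ replaced by $M\mathcal{O}_K$, to obtain
\[ H_{-1}(f^!(M\mathcal{O}_K))\cong[\Sigma^{-1}H\mathbb{Z},M\mathcal{O}_K]. \]
Since $\mathcal{O}_K\cong\mathbb{Z}^{[K:\mathbb{Q}]}$ is a finitely generated free abelian group, $M\mathcal{O}_K$ is a finite cell spectrum with homology $\mathcal{O}_K$ concentrated in degree zero, so Lin's theorem (applied with $A=\mathbb{Z}$, which is cyclic) identifies the stable homotopy group above with $\Ext^1_{\mathbb{Z}}(\mathbb{Q},\mathcal{O}_K)$. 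The long exact $\hom$--$\Ext$ sequence attached to $0\to\mathbb{Z}\to\mathbb{Q}\to\mathbb{Q}/\mathbb{Z}\to 0$, combined with the standard identification $\Ext^1_{\mathbb{Z}}(\mathbb{Q}/\mathbb{Z},\mathbb{Z})\cong\hat{\mathbb{Z}}$, produces $\Ext^1_{\mathbb{Z}}(\mathbb{Q},\mathbb{Z})\cong\hat{\mathbb{Z}}/\mathbb{Z}$; tensoring with $\mathcal{O}_K$ (valid because $\mathcal{O}_K$ is $\mathbb{Z}$-free and $\Ext^1$ commutes with finite direct sums in its second argument) yields $\mathcal{O}_K\otimes_{\mathbb{Z}}(\hat{\mathbb{Z}}/\mathbb{Z})$.

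For the adelic side, decompose $\mathbb{A}_K=(K\otimes_{\mathbb{Q}}\mathbb{R})\times\mathbb{A}_{K,f}$ into Archimedean and non-Archimedean parts, with $G(K)=K$ embedded diagonally. Quotienting on the left by $G(\mathbb{A}_K^{\infty})=K\otimes_{\mathbb{Q}}\mathbb{R}$ collapses the double quotient to $\mathbb{A}_{K,f}/K$. Standard additive approximation for a number field, namely the identification $K/\mathcal{O}_K\cong\bigoplus_{v\nmid\infty}K_v/\mathcal{O}_{K,v}\cong\mathbb{A}_{K,f}/(\mathcal{O}_K\otimes_{\mathbb{Z}}\hat{\mathbb{Z}})$, implies $\mathbb{A}_{K,f}=K+(\mathcal{O}_K\otimes_{\mathbb{Z}}\hat{\mathbb{Z}})$ and $K\cap(\mathcal{O}_K\otimes_{\mathbb{Z}}\hat{\mathbb{Z}})=\mathcal{O}_K$, so
\[ \mathbb{A}_{K,f}/K\cong(\mathcal{O}_K\otimes_{\mathbb{Z}}\hat{\mathbb{Z}})/\mathcal{O}_K\cong\mathcal{O}_K\otimes_{\mathbb{Z}}(\hat{\mathbb{Z}}/\mathbb{Z}), \]
landing at the same meeting point.

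Each step of each computation is functorial in finite extensions $K\hookrightarrow L$: Lin's theorem is natural in the target spectrum, $\Ext^1$ is natural in its second variable, and the adele ring, the ring of integers, its profinite completion, and the Archimedean/finite decomposition are all standard natural constructions. Composing the isomorphisms on each side produces the asserted natural isomorphism of functors $\mathcal{NF}\to\Ab$. I expect the main technical obstacle to be verifying naturality of the strong-approximation step: for $K\hookrightarrow L$ one must check that the adelic base change $\mathbb{A}_{K,f}\otimes_K L\to\mathbb{A}_{L,f}$ and the integral base change $(\mathcal{O}_K\otimes_{\mathbb{Z}}\hat{\mathbb{Z}})\otimes_{\mathcal{O}_K}\mathcal{O}_L\to\mathcal{O}_L\otimes_{\mathbb{Z}}\hat{\mathbb{Z}}$ commute with the identification $\mathbb{A}_{K,f}/K\cong(\mathcal{O}_K\otimes_{\mathbb{Z}}\hat{\mathbb{Z}})/\mathcal{O}_K$. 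This is a routine diagram chase that ultimately rests on the associativity-of-tensor-product identity $\mathcal{O}_L\otimes_{\mathcal{O}_K}(\mathcal{O}_K\otimes_{\mathbb{Z}}\hat{\mathbb{Z}})\cong\mathcal{O}_L\otimes_{\mathbb{Z}}\hat{\mathbb{Z}}$.
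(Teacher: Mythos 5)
Your argument is correct, and for the number-theoretic half it takes a genuinely different route from the paper's. The first half (the collapsing hyper-Ext spectral sequence plus Lin's theorem, giving $H_{-1}(f^!(M\mathcal{O}_K))\cong\Ext^1_{\mathbb{Z}}(\mathbb{Q},\mathcal{O}_K)$ naturally in $K$) is exactly the paper's opening move. After that the paper follows an argument of Boardman: it applies $\Ext_{\mathbb{Z}}(\mathbb{Q},-)$ to $0\to\mathcal{O}_K\to K\to K/\mathcal{O}_K\to 0$ and then computes $\hom_{\mathbb{Z}}(\mathbb{Q},K/\mathcal{O}_K)$ directly --- decomposing $K/\mathcal{O}_K$ into its $p$-primary pieces and running a colim/lim interchange --- to identify it with the finite adele ring $\mathbb{A}_K^{fin}$, so that $\Ext^1_{\mathbb{Z}}(\mathbb{Q},\mathcal{O}_K)$ appears as the cokernel of $K\to\mathbb{A}_K^{fin}$ with no appeal to strong approximation; a side benefit is the intrinsically adelic description of $\hom_{\mathbb{Z}}(\mathbb{Q},K/\mathcal{O}_K)$ itself. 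You instead reduce the $\Ext$ side to the rank-one computation $\Ext^1_{\mathbb{Z}}(\mathbb{Q},\mathbb{Z})\cong\hat{\mathbb{Z}}/\mathbb{Z}$ and tensor up, and then collapse the adelic side to $\hat{\mathcal{O}}_K/\mathcal{O}_K$ via the additive strong approximation $\mathbb{A}_{K,f}=K+\hat{\mathcal{O}}_K$, $K\cap\hat{\mathcal{O}}_K=\mathcal{O}_K$, meeting in the middle at $\mathcal{O}_K\otimes_{\mathbb{Z}}(\hat{\mathbb{Z}}/\mathbb{Z})$. This is more elementary on the homological side but shifts the number-theoretic weight onto strong approximation, and it obliges you to two extra naturality checks that the paper's route avoids: that the splitting $\Ext^1_{\mathbb{Z}}(\mathbb{Q},\mathcal{O}_K)\cong\mathcal{O}_K\otimes_{\mathbb{Z}}\Ext^1_{\mathbb{Z}}(\mathbb{Q},\mathbb{Z})$ is induced by the canonical transformation $M\otimes F(\mathbb{Z})\to F(M)$ for the additive functor $F=\Ext^1_{\mathbb{Z}}(\mathbb{Q},-)$ (so that it is natural in $\mathcal{O}_K$, not merely an unnatural isomorphism of free modules), and the compatibility of strong approximation with base change $K\hookrightarrow L$ that you already flag. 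Both checks go through as you indicate, so the proof is sound.
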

\begin{proof}
From the proof of Prop.~\ref{homology of dualizing complex} we know that 
\begin{eqnarray*} H_{-1}(f^!(M\mathcal{O}_K)) & \cong & \Ext^1_{\mathbb{Z}}(\mathbb{Q},H_0(M\mathcal{O}_K;\mathbb{Z})) \\
 & \cong & \Ext^1_{\mathbb{Z}}(\mathbb{Q},\mathcal{O}_K),\end{eqnarray*}
and this isomorphism is natural in $K$; we need to show that
$\Ext^1_{\mathbb{Z}}(\mathbb{Q},\mathcal{O}_K)$ is naturally isomorphic to
$G(\mathbb{A}_K^{\infty})\backslash G(\mathbb{A}_K) / G(K)$, where $G$ is the additive
group scheme. Our method of proof is based on an unpublished proof of J. Michael Boardman's.
We have the short exact sequence of abelian groups
\[ 0\rightarrow \mathcal{O}_K\rightarrow K\rightarrow K/\mathcal{O}_K\rightarrow 0\]
and, after applying the functor $\Ext_{\mathbb{Z}}^1(\mathbb{Q},-)$ to this short
exact sequence, we get the short exact sequence
\[ 0\rightarrow
 \hom_{\mathbb{Z}}(\mathbb{Q},K)\rightarrow 
   \hom_{\mathbb{Z}}(\mathbb{Q},K/\mathcal{O}_K) \rightarrow
     \Ext_{\mathbb{Z}}^1(\mathbb{Q},\mathcal{O}_K) \rightarrow 0,\]
since $\hom_{\mathbb{Z}}(\mathbb{Q},\mathcal{O}_K)\cong \Ext^1_{\mathbb{Z}}(\mathbb{Q},K)\cong 0$.
Now the torsion group $K/\mathcal{O}_K$ decomposes as the direct sum 
$K\mathcal{O}_K\cong \bigoplus_{p}(K/\mathcal{O}_K)_{(p)}$ of its localizations at the primes of $\mathbb{Z}$,
so we have isomorphisms
\begin{eqnarray*} \hom_{\mathbb{Z}}(\mathbb{Q},K/\mathcal{O}_K) & \cong &
 \hom_{\mathbb{Z}}(\mathbb{Q},\bigoplus_p (K/\mathcal{O}_K)_{(p)} ) \\
 & \subseteq & \hom_{\mathbb{Z}}(\mathbb{Q},\prod_p (K/\mathcal{O}_K)_{(p)}) \\
 & \cong & \prod_p \hom_{\mathbb{Z}}(\mathbb{Q}, (K/\mathcal{O}_K)_{(p)}) \\
 & \cong & \prod_p \hom_{\mathbb{Z}}(\mathbb{Z}_{(p)}, (K/\mathcal{O}_K)_{(p)}) \\
 & \cong & \prod_p \hom_{\mathbb{Z}}\left(\colim \left(\mathbb{Z}\stackrel{p}{\longrightarrow} \mathbb{Z}\stackrel{p}{\longrightarrow} \dots\right),
     (K/\mathcal{O}_K)_{(p)} \right) \\
 & \cong & \prod_p \lim\left( \dots\stackrel{p}{\longrightarrow} 
              \hom_{\mathbb{Z}}(\mathbb{Z},(K/\mathcal{O}_K)_{(p)})\stackrel{p}{\longrightarrow} \hom_{\mathbb{Z}}(\mathbb{Z},(K/\mathcal{O}_K)_{(p)}) \right) \\
 & \cong & \prod_p \lim\left(\dots \stackrel{p}{\longrightarrow}  (K/\mathcal{O}_K)_{(p)}\stackrel{p}{\longrightarrow} (K/\mathcal{O}_K)_{(p)} \right) \\
 & \cong & \prod_p K_p,\end{eqnarray*}
where the isomorphism $\hom_{\mathbb{Z}}(\mathbb{Q}, (K/\mathcal{O}_K)_{(p)}) \cong \hom_{\mathbb{Z}}(\mathbb{Z}_{(p)}, (K/\mathcal{O}_K)_{(p)})$
holds because \linebreak $(K/\mathcal{O}_K)_{(p)}$ is uniquely $m$-divisible for every integer $m$ prime to $p$, so every morphism $\mathbb{Z}_{(p)}\rightarrow
(K/\mathcal{O}_K)_{(p)}$ extends uniquely to a morphism $\mathbb{Q}\rightarrow (K/\mathcal{O}_K)_{(p)}$. Morphisms $\mathbb{Q}\rightarrow (K/\mathcal{O}_K)_{(p)}$
which correspond to elements in $(\hat{\mathcal{O}}_K)_p\subseteq K_p$ are precisely those morphisms which factor through the quotient map 
$\mathbb{Q}\rightarrow \mathbb{Q}/\mathbb{Z}$, so we get Diagram 1, a commutative diagram with exact
rows.

\begin{sidewaysfigure}
\[\xymatrix{  & & \mbox{Diagram 1.} & & \\
0 \ar[r]\ar[d] & \hom_{\mathbb{Z}}(\mathbb{Q}/\mathbb{Z},(K/\mathcal{O}_K)_{(p)}) \ar[r]\ar[d]^{\cong} & 
                                  \hom_{\mathbb{Z}}(\mathbb{Q},(K/\mathcal{O}_K)_{(p)}) \ar[r]\ar[d]^{\cong} & 
                                       \hom_{\mathbb{Z}}(\mathbb{Z},(K/\mathcal{O}_K)_{(p)}) \ar[r] \ar[d]^{\cong} & 0\ar[d] \\
 0\ar[r]\ar[d] & \hom_{\mathbb{Z}}(\colim_n \mathbb{Z}/p^n\mathbb{Z},(K/\mathcal{O}_K)_{(p)}) \ar[r]\ar[d]^{\cong} &
                     \hom_{\mathbb{Z}}(\colim_n\mathbb{Z}, (K/\mathcal{O}_K)_{(p)}) \ar[r]\ar[d]^{\cong} & 
                         (K/\mathcal{O}_K)_{(p)}\ar[r]\ar[d]^{\cong} & 0\ar[d] \\
 0\ar[r]\ar[d] & \lim_n \hom_{\mathbb{Z}}(\mathbb{Z}/p^n,(K/\mathcal{O}_K)_{(p)}) \ar[r]\ar[d]^{\cong} & 
                      \lim \left( \dots \stackrel{p}{\longrightarrow}   (K/\mathcal{O}_K)_{(p)} \stackrel{p}{\longrightarrow}  (K/\mathcal{O}_K)_{(p)} \right) \ar[r]\ar[d]^{\cong} &
                          (K/\mathcal{O}_K)_{(p)}\ar[r]\ar[d]^{\cong} & 0\ar[d] \\
 0\ar[r] & (\hat{\mathcal{O}}_K)_p \ar[r] & K_p \ar[r] & (K/\mathcal{O}_K)_{(p)}\ar[r] & 0. }
\]
\end{sidewaysfigure}

Morphisms $\mathbb{Q}\rightarrow \prod_p (K/\mathcal{O}_K)_{(p)}$ which factor through the inclusion \linebreak
$\bigoplus_p (K/\mathcal{O}_K)_{(p)}\hookrightarrow \prod_p (K/\mathcal{O}_K)_{(p)}$ are the morphisms such that the composite of the $p$th component
map $\mathbb{Q}\rightarrow (K/\mathcal{O}_K)_{(p)}$ with the inclusion $\mathbb{Z}\hookrightarrow\mathbb{Q}$ is zero for all but finitely many $p$, i.e., 
these are the morphisms $\mathbb{Q}\rightarrow \prod_p (K/\mathcal{O}_K)_{(p)}$ such that the $p$th component map $\mathbb{Q}\rightarrow (K/\mathcal{O}_K)_{(p)}$
factors through $\mathbb{Q}\rightarrow\mathbb{Q}/\mathbb{Z}$ for all but finitely many $p$; these are the elements of
$\hom_{\mathbb{Z}}(\mathbb{Q},\prod_p (K/\mathcal{O}_K)_{(p)})\cong \prod_p K_p$ which correspond to elements of $\prod_p K_p$ whose $p$th component
is in $(\hat{\mathcal{O}}_K)_p\subseteq K_p$ for all but finitely many $p$. Hence we have the commutative diagram with exact rows:
\[ \xymatrix{ 0\ar[r]\ar[d] & \hom_{\mathbb{Z}}(\mathbb{Q},K)\ar[r]\ar[d]^{\cong} & \hom_{\mathbb{Z}}(\mathbb{Q},K/\mathcal{O}_K)\ar[r]\ar[d]^{\cong} &
 \Ext^1_{\mathbb{Z}}(\mathbb{Q},\mathcal{O}_K)\ar[r]\ar[d]^{\cong} & 0 \ar[d] \\
 0\ar[r] & K\ar[r] & \mathbb{A}^{fin}_K\ar[r] & \mathbb{A}^{fin}_K/K\ar[r] & 0,}\]
where by $\mathbb{A}^{fin}_K$ we mean the finite adeles of $K$, i.e., $\mathbb{A}^{fin}_K\cong \mathbb{A}_K/\mathbb{A}_K^{\infty}$.
\end{proof}

\section{$\Spec \mathbb{Z}$ and $\Spec S$, after To\"{e}n-Vaqui\'{e}.}

In this section we change gears entirely, and try a different approach to the construction
and study of a morphism $\Spec\mathbb{Z}\rightarrow\Spec S$: 
we will see what happens when we take $\Spec \mathbb{Z}$ to
mean the category of abelian groups, and we take $\Spec S$ to be the category
of connective $S$-modules, that is, the category of $S$-modules $X$ such that 
$\pi_i(X)$ is trivial for $i<0$. In this setup, unlike that of the previous section in
which we were really concerned with triangulated categories, we do not have a desuspension
operation defined on either of the categories under consideration; but there are some ways
in which this setup is a desirable one. For one thing, we will find that the base-change
functor from $\Spec S$ to $\Spec \mathbb{Z}$ is, in this setup, the zeroth homotopy
group functor $\pi_0$, rather than the singular chain complex functor as in the 
previous section; this is desirable because, in derived algebraic geometry, one would like
to think of a derived scheme $X^{\der}$ as consisting of an ordinary scheme $X$ together
with a homotopy-Cartesian presheaf $\mathcal{E}$ of $E_{\infty}$-ring spectra on the local Zariski 
(or perhaps
\'{e}tale, or syntomic, or...) site of $X$ with the property that $\pi_0(\mathcal{E})$ recovers
the structure ring sheaf of that site. From this point of view, we have a commutative
square
\[ \xymatrix{ X\ar[r]\ar[d] & X^{\der} \ar[d] \\
\Spec \mathbb{Z} \ar[r] & \Spec S}\]
which is a pullback square, in the sense that $X$ is obtained by applying $\pi_0$ to $X^{\der}$, 
i.e., $X$ is obtained by base-change along $\Spec \mathbb{Z}\rightarrow \Spec S$ (of course
this cannot be a pullback square in a categorical sense unless we actually describe a
category in which each of these objects lives; this also is
not a pullback square in the ``everything is triangulated''
setup of the previous section in this note). These observations we make
in this note are by no means a replacement for the well-developed theory of 
HAG, due to To\"{e}n and Vezzosi (primarily in \cite{MR2137288} and 
\cite{MR2394633}), and DAG, due to Lurie (in a continuing series, currently at
six volumes; the first is \cite{dagi}); 
for example, our situation (which involves only 
connective spectra, not periodic spectra) is still weaker than the technology required
to construct the derived moduli stack of derived elliptic curves, and the global sections of its
``$\infty$-sheafification'' (Joyal-Jardine fibrant replacement), 
(a localization of) the spectrum $tmf$ of topological modular forms.

There is one other sense in which the present setup (in which $\Spec \mathbb{Z}$ is taken as the
category of $\mathbb{Z}$-modules, and not some category of chain complexes of $\mathbb{Z}$-modules)
may be more desirable than one in which we are really concerned with triangulated categories:
the framework of To\"{e}n and Vaqui\'{e} for ``relative algebraic geometry,'' from the
paper \cite{MR2507727}, takes as input a complete, co-complete, closed symmetric monoidal category
$\mathcal{C}$, and produces a category of ``$\mathcal{C}$-schemes'' such that the
category $\mathcal{C}$ winds up being precisely the category of 
quasicoherent/Cartesian modules over the terminal object in the category of $\mathcal{C}$-schemes.
When applied to the category of abelian groups, with monoidal product the tensor product, 
To\"{e}n-Vaqui\'{e}'s framework produces the classical category of schemes; and when applied to
(a category Quillen-equivalent to)
the category of $S$-modules, with monoidal product the smash product, To\"{e}n-Vaqui\'{e}'s 
framework produces a useful category of ``derived schemes'' or ``spectral schemes.'' One can also
put the category of chain complexes of abelian groups through the machinery of To\"{e}n-Vaqui\'{e},
but then one gets a category of derived schemes over $\Spec H\mathbb{Z}$ (which was essentially the way we treated
$\Spec \mathbb{Z}$ in the previous section), 
not the classical category of schemes.

In most other ways the approach taken in the previous section is preferable to that of this
section: the connectivity assumption made on spectra in the previous section is
weaker, and more importantly, the adjunction of the previous section really is the 
``correct'' notion of a derived adjunction. 

First, we want to know that there really is a morphism
\[ \Spec \mathbb{Z}\stackrel{i}{\longrightarrow} \Spec S\]
in our present setup; in other words, we want to know that there is a faithful functor
\[ \Mod(\mathbb{Z})\stackrel{i_*}{\longrightarrow} \Mod(S)^{\geq 0}\]
where $\Mod(S)^{\geq 0}$ is the category of connective $S$-modules, and we want 
a functor 
\[ \Mod(\mathbb{Z})\stackrel{i^*}{\longleftarrow} \Mod(S)^{\geq 0}\]
which is well-defined on the homotopy category $\Ho(\Mod(S)^{\geq 0})$ and
such that we have a homotopy adjunction between them, i.e., an isomorphism
\[ \hom_{\Mod(\mathbb{Z})}( i^*X ,A) \cong [X,i_*A]\]
natural in the choice of spectrum $X$ and abelian group $A$. 
As usual we want
$i_*$ to be the Eilenberg-Maclane functor $H$, since it is the most obvious and natural embedding
of the category of abelian groups into the category of spectra. We claim that $i_*$ has
homotopy left adjoint $\pi_0$, the zeroth homotopy group functor. Indeed, we have the ``trivial'' 
Adams spectral sequence
\[ \Ext_{\grMod(\pi_*(S))}^s(\Sigma^t\pi_*(X),A) \Rightarrow [\Sigma^{t-s}X,HA],\]
where $\Ext_{\grMod(\pi_*(S))}$ is the derived functor of $\hom$ in the category of graded
modules over the graded ring $\pi_*(S)$ of stable homotopy groups of spheres.
Since $X$ and $HA$ are both connective, there is no room for differentials to come from or hit the
class in bidegree $(0,0)$, so we have isomorphisms
\begin{eqnarray*}  [X,HA] & \cong & \Ext_{\grMod(\pi_*(S))}^0(\pi_*(X),A) \\
 & \cong & \hom_{\grMod(\pi_*(S))}(\pi_*(X),A) \\
 & \cong & \hom_{\grMod(\pi_0(S))}(\pi_0(X),A) \\
 & \cong & \hom_{\Mod(\mathbb{Z})}(\pi_0(X),A).\end{eqnarray*}
So $\pi_0$ is homotopy left adjoint to $H$.

Now we proceed to Grothendieck duality. We want a homotopy right adjoint to $i_* = H$, i.e., we want
a functor $i^!$ with an isomorphism
\[ \hom_{\mathbb{Z}}(A,i^!X) \cong [HA,X]\]
natural in the choice of connective spectrum $X$ and abelian group $A$.
As a consequence of the results of the previous section, such a functor exists and is simply the zeroth
homology of the complex $f^!X$:
\[ i^!X \cong H_0f^!X,\]
and by Lin's theorem, $i^!$ is given on finite cell complexes $X$ by
\begin{eqnarray*} i^!X & \cong & \hom_{\mathbb{Z}}(\mathbb{Z},i^!X) \\
 & \cong & [H\mathbb{Z},X] \\
 & \cong & \Ext^1_{\mathbb{Z}}(\mathbb{Q},H_1(X,\mathbb{Z})).\end{eqnarray*}
Hence $i^!$ vanishes on finite cell complexes with finite first integral homology group $H_1$, e.g. the sphere spectrum.

\bibliography{/home/asalch/texmf/tex/salch}{}
\bibliographystyle{plain}
\end{document}